\definecolor{lightblue}{rgb}{0.22,0.45,0.70}
\definecolor{green}{rgb}{0.22,0.55,0.00}
\newtheorem{lemma}{Lemma}[section]
\newtheorem{theorem}{Theorem}[section]
\newtheorem{remark}{Remark}[section]
\numberwithin{equation}{section}
\newcommand\ff{\boldsymbol{f}}
\newcommand\bu{\boldsymbol{u}}
\newcommand\bv{\boldsymbol{v}}
\newcommand\cT{\mathcal{T}}
\newcommand\R{\mathbb{R}}
\newcommand\K{\mathbf{K}}
\newcommand{\norm}[1]{\ensuremath{\left\|#1\right\|}}
\newcommand*\nnorm[1]{|\!|\!| #1 |\!|\!|}
\renewcommand\O{\Omega}
\newcommand\G{\Gamma}
\renewcommand\H{\mathrm{H}}
\renewcommand\L{\mathrm{L}}
\newcommand\W{\mathrm{W}}
\newcommand\Q{\mathrm{Q}}
\newcommand\Z{\mathrm{Z}}
\newcommand\LO{\L^2(\O)}
\newcommand\LOO{\L_{0}^2(\O)}
\newcommand\vdiv{\mathop{\mathrm{div}}\nolimits}
\newcommand\bdiv{\mathop{\mathbf{div}}\nolimits}
\newcommand\HO{\H^1(\O)}
\newcommand\HCUO{\H_{0}^1(\O)}
\newcommand\HsO{\H^s(\O)}
\newcommand\HusO{\H^{1+s}(\O)}
\newcommand\bn{\boldsymbol{n}}
\newcommand\beps{\boldsymbol{\varepsilon}}
\numberwithin{theorem}{section}
\numberwithin{lemma}{section}
\numberwithin{corollary}{section}
\numberwithin{proposition}{section}
\numberwithin{remark}{section}
\newcommand\curl{\mathop{\mathbf{curl}}\nolimits}
\newcommand\CT{{\mathcal T}}
\newcommand\CP{{\mathcal P}}
\newcommand\bI{\mathbf I}
\newcommand\cero{\boldsymbol{0}}
\newcommand\bomega{\boldsymbol{\omega}}
\newcommand\btheta{\boldsymbol{\theta}}
\newcommand\bbP{\mathbb{P}}
\title{Incorporating variable viscosity in vorticity-based\\
formulations for Brinkman equations\thanks{
Funding: CONICYT-Chile through FONDECYT project 11160706,
through Becas-Chile Programme for foreign students
and through the project AFB170001 of the PIA Program:
Concurso Apoyo a Centros Cient\'ificos y
Tecnol\'ogicos de Excelencia con Financiamiento Basal.}}
\author{Ver\'onica Anaya\thanks{GIMNAP, Departamento de Matem\'atica,
Universidad del B\'io-B\'io, Casilla 5-C, Concepci\'on, Chile. E-mail:  
{\tt vanaya@ubiobio.cl}.}, \quad  
 Bryan G\'omez-Vargas\thanks{Secci\'on de Matem\'atica,
 Sede de Occidente, Universidad de Costa Rica, San Ram\'on,
 Costa Rica. Present address:  CI$^2$MA and Departamento de Ingenier\'\i a Matem\' atica,
Universidad de Concepci\' on, Casilla 160-C, Concepci\' on, Chile,
email: {\tt bryan.gomezvargas@ucr.ac.cr}.}, \quad 
David Mora\thanks{GIMNAP, Departamento de Matem\'atica,
Universidad del B\'io-B\'io, Casilla 5-C, Concepci\'on, Chile; and CI$^2$MA,
Universidad de Concepci\' on, Concepci\' on, Chile. E-mail:  
{\tt dmora@ubiobio.cl}.}, \quad 
 Ricardo Ruiz-Baier\thanks{Mathematical Institute,
University of Oxford, A. Wiles Building, 
Woodstock Road, Oxford OX2 6GG, UK. E-mail: {\tt ruizbaier@maths.ox.ac.uk}.}}  
\date{}
\begin{document}
\maketitle

\begin{abstract} 
In this brief note, we introduce a non-symmetric mixed finite
element formulation for Brinkman equations written in terms
of velocity, vorticity and pressure with non-constant viscosity.
The analysis is performed by the classical Babu\v ska-Brezzi theory,
and we state that any inf-sup stable finite element pair for Stokes
approximating velocity and pressure can be coupled with a generic
discrete space of arbitrary order for the vorticity. We establish optimal
a priori error estimates which are further confirmed through computational examples. 
%
\end{abstract}

\noindent
{\bf Keywords}: Brinkman equations; vorticity-based formulation; 
mixed finite elements; variable viscosity; error analysis. 

\smallskip\noindent
{\bf Mathematics subject classifications (2000)}: 65N30; 65N12; 76D07; 65N15.

\section{Introduction}
Formulations for flow equations that use vorticity as an additional unknown enjoy many appealing features \cite{speziale87}, and starting from the works \cite{chang90,DSScmame03}, they have been employed in many instances (see e.g. \cite{ACPT07,ACT,anaya15b,amoura07,bernardi06,SSIMA15,cockburn,vassilevski14,anaya16,tsai05}). However, a major limitation in all of these contributions, in comparison with competing formulations using solely the primal variables, is that the transformation of the momentum equation introducing vorticity (and subsequently using a convenient structure of the problem to analyse its mathematical properties and devising suitable numerical schemes) is only valid when the viscosity is constant. Plus, a number of applications including Stokes flow and coupled thermal or thermo-haline effects with Brinkman flows (see e.g. \cite{john15,rudi17,trias13} and \cite{patil82,payne99,woodfield19}, respectively) depend strongly on marked spatial distributions of viscosity.  

In this brief note, we provide a way of incorporating variable viscosities while keeping vorticity as field variable. The resulting non-symmetric formulation is augmented via least-squares terms involving the constitutive equation and mass conservation equation and subsequently the problem maintains a saddle-point structure amenable to analysis through classical tools from mixed methods (under the assumption that the viscosity is regular enough). Even if we have decided to provide all steps for the specific case of Brinkman equations, the same ideas in principle carry over to other vorticity-based models such as Oseen, Navier-Stokes, interfacial flows, and coupled Boussinesq or flow-transport problems. 

The main advantages of the propose scheme are the direct approximation of vorticity without invoking any postprocessing, and also the simplicity of the analysis and implementation. Indeed, one can use standard inf-sup stable finite elements for the Stokes equations plus any conforming discrete space for vorticity. 

\noindent\textbf{Outline.} In Section~\ref{sec:model}, we recall the governing equations and state the least-squares--based augmented formulation. There we also perform the solvability analysis employing standard arguments from the Babu\v ska--Brezzi theory. The finite element discretisation is presented in Section~\ref{sec:FEM}, where we also write a stability analysis and derive optimal error estimates. A few numerical tests illustrating the convergence of the proposed method  are finally reported in Section~\ref{sec:results}.

\section{Variable viscosity Brinkman equations}\label{sec:model}
Let $\O$ be a  bounded domain of $\R^3$ with Lipschitz boundary $\G=\partial\O$, and 
let us write the following version of the Brinkman equations with variable viscosity 
where the unknowns are velocity $\bu$, vorticity $\bomega$, 
and pressure $p$ of the incompressible viscous fluid
\begin{align}\label{eq:momentum}
\nu\K^{-1}\bu+\nu\curl\bomega-2\beps(\bu)\nabla\nu+\nabla p & = 
  \ff & \mbox{ in } \O, \\
  \bomega-\curl\bu&=\cero & \mbox{ in } \O,\label{eq:constitutive} \\ 
  \vdiv\bu & =  0 & \mbox{ in } \O,  \label{eq:mass}\\ 
  \bu & = \cero&    \mbox{ on } \G,\label{eq:bc1}\\
  (p,1)_{0,\O}&=0.&\label{eq:bc2}
 \end{align}
The kinematic viscosity is assumed such that 
$\nu\in \W^{1,\infty}(\Omega)$ and
\begin{equation}\label{nubound}
0<\nu_0\le\nu\le\nu_1.
\end{equation}
Moreover, $\ff\in \L^2(\Omega)^3$ is a force density and 
 $\K\in \L^{\infty}(\Omega)^{3\times3}$
is the (symmetric and uniformly positive definite) tensor of permeability. 
In particular, there exist $\sigma_{\min},\sigma_{\max}>0$ such that
$$\sigma_{\min}\vert \bv\vert^2\le\bv^{t}\K^{-1}\bv\le\sigma_{\max}
\vert\bv\vert^2\qquad\forall\bv\in\R^3.$$ 
Instead of $\nu\K^{-1}$ some works equivalently use $\hat{\K}^{-1}$ 
as the drag coefficient in the momentum equation, where $\hat{\K} = \K/\nu$. 
Note that  \eqref{eq:momentum} can be derived from the usual momentum 
equation by invoking the identity 
$$-2\bdiv(\nu\beps(\bu))=-2\nu\bdiv(\beps(\bu))-2\beps(\bu)\nabla\nu
=-\nu\Delta\bu-2\beps(\bu)\nabla\nu=\nu\curl(\curl\bu)-2\beps(\bu)\nabla\nu,$$
where $\beps(\bu)$  is the strain rate tensor and where we have also used 
\eqref{eq:mass} and the additional identity 
\begin{equation}\label{curl-curl}
\curl(\curl\bv)=-\Delta\bv+\nabla(\vdiv\bv).
\end{equation}

\subsection{Variational formulation and preliminary results}
For any $s\geq 0$, the notation $\norm{\cdot}_{s,\O}$ stands
for the norm of the Hilbertian Sobolev spaces $\HsO$ or
$\HsO^3$, with the usual convention $\H^0(\O):=\LO$.  
We also endow the space $\HCUO^3$ with the following norm:
$$\nnorm{\bv}_{1,\O}^2:=\Vert\bv\Vert_{0,\O}^2
+\Vert\curl\bv\Vert_{0,\O}^2+\Vert\vdiv\bv\Vert_{0,\O}^2.$$
We note that in $\HCUO^3$, the above norm is equivalent
to the usual norm. In particular, we have that there
exists a positive constant $C_{pf}$ such that:
\begin{equation}\label{poinc}
\Vert\bv\Vert_{1,\Omega}^2\le C_{pf}(\Vert\curl\bv\Vert_{0,\Omega}^2
+\Vert\vdiv\bv\Vert_{0,\O}^2)\qquad\forall\bv\in\HCUO^3,
\end{equation}
the above inequality is a consequence of the identity $\Vert\nabla\bv\Vert_{0,\O}^2=\Vert\curl\bv\Vert_{0,\Omega}^2
+\Vert\vdiv\bv\Vert_{0,\O}^2$ which follows from \eqref{curl-curl}
and the Poincar\'e inequality.

\noindent Testing \eqref{eq:momentum}-\eqref{eq:mass} 
appropriately, using Green's formula in the following version (see \cite[Thm.~2.11]{gr-1986})
\begin{equation*}
\int_{\O}\curl\bomega\cdot\bv=\int_{\O}\bomega\cdot\curl\bv+\langle\bomega\times\bn,\bv\rangle_{\partial\O},
\end{equation*}
and applying the boundary conditions \eqref{eq:bc1}-\eqref{eq:bc2}, we get the following weak formulation
\begin{align*}
\int_{\O}\nu\K^{-1}\bu\cdot\bv-2\int_{\O}\beps(\bu)\nabla\nu\cdot\bv
+\int_{\O}\nu\bomega\cdot\curl\bv+\int_{\Omega}\bomega\cdot(\nabla\nu\times\bv)-\int_{\O}p\vdiv\bv&=
\int_{\O}\ff\cdot\bv&
\forall\bv\in\HCUO^3,\\ 
\int_{\O}\nu\btheta\cdot\curl\bu-\int_{\O}\nu\bomega\cdot\btheta&=\,0&\forall\btheta\in\LO^3,\\
-\int_{\O}q\vdiv\bu&=\,0&\forall q\in\LOO,
\end{align*}
where $\LOO:=\{q\in\LO: (q,1)_{0,\Omega}=0\}$. Then, we proceed to augment this 
formulation with the
following residual terms arising from equations \eqref{eq:constitutive}
and \eqref{eq:mass}:
\begin{align}\label{least0}
\kappa_1\nu_0\int_{\O}(\curl\bu-\bomega)\cdot\curl\bv&=0\qquad\forall\bv\in\HCUO^3,\\
\label{least1}
\kappa_2\int_{\O}\vdiv\bu\vdiv\bv&=0\qquad\forall\bv\in\HCUO^3,
\end{align}
with $\nu_0>0$ (cf. \eqref{nubound}), and
where $\kappa_1$ and $\kappa_2$ are positive parameters to be specified later. Then, the 
augmented formulation reads:  
{\em Find $((\bu,\bomega),p)\in(\HCUO^3\times\LO^3)\times\LOO$ such that}
\begin{equation}\label{probform2}
\begin{split}
A((\bu,\bomega),(\bv,\btheta))+B((\bv,\btheta),p)=&\;G(\bv,
\btheta)\qquad\forall(\bv,\btheta)\in\HCUO^3\times\LO^3,\\
B((\bu,\bomega),q)=&\;0\qquad\forall q\in\LOO,
\end{split}
\end{equation}
where the bilinear forms and the linear functional are defined by
\begin{align}
A((\bu,\bomega),(\bv,\btheta))& := \int_{\O}\nu\K^{-1}\bu\cdot\bv
+\int_{\O}\nu\bomega\cdot\btheta+\int_{\O}\nu\bomega\cdot\curl\bv-\int_{\O}\nu\btheta\cdot\curl\bu +\kappa_1\nu_0\!\!\int_{\O}\curl\bu\cdot\curl\bv\nonumber\\
\label{defa1} &\quad +\kappa_2\!\!\int_{\O}\vdiv\bu\vdiv\bv
-\kappa_1\nu_0\!\!\int_{\O}\bomega\cdot\curl\bv-2\!\int_{\O}\beps(\bu)\nabla\nu\cdot\bv+\int_{\O}\bomega\cdot(\nabla\nu\times\bv),\\
\nonumber B((\bv,\btheta),q)& :=-\int_{\O}q\vdiv\bv, \qquad 
G(\bv,\btheta) :=\int_{\O}\ff\cdot\bv,
\end{align}
for all $(\bu,\bomega),(\bv,\btheta)\in\HCUO^3\times\LO^3$, and
$q\in\LOO$. 

\subsection{Unique solvability of the augmented formulation}
Problem \eqref{probform2} accommodates an analysis directly
under the classical Babu\v ska-Brezzi theory~\cite{bbf-2013,G2014}.
More precisely, the continuity of the bilinear and linear functionals in \eqref{defa1}
is a direct consequence of Lemma~\ref{bounds} below, whose 
proof is obtained by rather standard arguments. 
In particular, the penultimate estimate holds owing to 
the assumption $\nabla\nu\in\L^{\infty}(\Omega)^3$ and
the fact that $\Vert\nabla\nu\times\bv\Vert_{0,\O}\le2\Vert\nabla\nu\Vert_{\infty,\O}\Vert\bv\Vert_{0,\O}$. 
Then, the ellipticity of $A$, stated in Lemma~\ref{lem-elip}, follows from adding the redundant terms in \eqref{least0}-\eqref{least1}. 
\begin{lemma}\label{bounds}
The following estimates hold 
\begin{gather*}
\vert \int_{\O}\nu\K^{-1}\bu\cdot\bv\vert \le \sigma_{\max}\nu_1\nnorm{\bu}_{1,\O}\nnorm{\bv}_{1,\O},\qquad 
\vert\int_{\O}\nu\bomega\cdot\btheta\vert \le \nu_1\Vert\bomega\Vert_{0,\O}\Vert\btheta\Vert_{0,\O},\\
\vert\int_{\O}\nu\btheta\cdot\curl\bv\vert \le \nu_1\Vert\btheta\Vert_{0,\O}\nnorm{\bv}_{1,\O},\qquad 
\vert\int_{\O}\beps(\bu)\nabla\nu\cdot\bv\vert \le \Vert\nabla\nu\Vert_{\infty,\O}\Vert\beps(\bu)\Vert_{0,\O}\Vert\bv\Vert_{0,\O},\\
\vert\int_{\O}\btheta\cdot(\nabla\nu\times\bv)\vert \le 2\Vert\nabla\nu\Vert_{\infty,\O}\Vert\bv\Vert_{0,\O}\Vert\btheta\Vert_{0,\O},\qquad 
\vert G(\bv,\btheta)\vert \le \Vert\ff \Vert_{0,\O}\Vert\bv\Vert_{0,\O}.
\end{gather*}
\end{lemma}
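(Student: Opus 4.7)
The proof is a mechanical application of Cauchy--Schwarz and Hölder inequalities, combined with the pointwise bounds assumed on $\nu$, $\K^{-1}$, and $\nabla\nu$. I would simply treat the six estimates in turn, taking care to land the factors in exactly the norms stated so that the bilinear form $A$ can afterwards be shown continuous in the product space $\HCUO^3\times\LO^3$ endowed with $\nnorm{\cdot}_{1,\O}$ and $\Vert\cdot\Vert_{0,\O}$.

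First I would treat the $\K^{-1}$ term. The uniform positive definiteness bound $\bv^{t}\K^{-1}\bv\le\sigma_{\max}|\bv|^{2}$ gives, via the induced Cauchy--Schwarz inequality for the symmetric factor $\K^{-1/2}$, the pointwise bound $|\K^{-1}\bu\cdot\bv|\le\sigma_{\max}|\bu||\bv|$. Combined with $\nu\le\nu_{1}$, an $L^{2}$ Cauchy--Schwarz, and the trivial embedding $\Vert\cdot\Vert_{0,\O}\le\nnorm{\cdot}_{1,\O}$ built into the definition of $\nnorm{\cdot}_{1,\O}$, this yields the first estimate. The second, third, and sixth inequalities are equally immediate from Cauchy--Schwarz and $\nu\le\nu_{1}$, with the extra ingredient $\Vert\curl\bv\Vert_{0,\O}\le\nnorm{\bv}_{1,\O}$ used where $\curl\bv$ appears.

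The two estimates involving $\nabla\nu$ require Hölder's inequality in the form $|\int fgh|\le\Vert f\Vert_{\infty,\O}\Vert g\Vert_{0,\O}\Vert h\Vert_{0,\O}$, which is legitimate since $\nu\in\W^{1,\infty}(\O)$. For the strain-gradient term this gives the fourth bound directly. For the cross-product term I would invoke the elementary pointwise inequality $|\nabla\nu\times\bv|\le 2\Vert\nabla\nu\Vert_{\infty,\O}|\bv|$ recalled in the remark preceding the lemma (a slight overestimate of the sharper $|a\times b|\le|a||b|$, but harmless here), and then apply Cauchy--Schwarz in $L^{2}$.

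I do not anticipate any genuine obstacle: the entire argument is bookkeeping of constants, and the only item demanding attention is ensuring that $\bu$ and $\bv$ are measured in $\nnorm{\cdot}_{1,\O}$ exactly when the norm of $\curl$ or $\vdiv$ is being absorbed, and in $\Vert\cdot\Vert_{0,\O}$ otherwise; this is why the sharper forms \eqref{nubound} and $\Vert\nabla\nu\Vert_{\infty,\O}<\infty$ are used rather than any global equivalence.
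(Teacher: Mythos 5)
Your proposal is correct and matches the paper's (essentially unwritten) proof: the authors simply state that the estimates follow ``by rather standard arguments'' and single out exactly the two ingredients you identify, namely the pointwise bound $\vert\K^{-1}\bu\cdot\bv\vert\le\sigma_{\max}\vert\bu\vert\vert\bv\vert$ together with $\nu\le\nu_1$, and the inequality $\Vert\nabla\nu\times\bv\Vert_{0,\O}\le 2\Vert\nabla\nu\Vert_{\infty,\O}\Vert\bv\Vert_{0,\O}$ for the penultimate estimate. Your bookkeeping of which factors land in $\nnorm{\cdot}_{1,\O}$ versus $\Vert\cdot\Vert_{0,\O}$ is exactly what is needed for the subsequent continuity of $A$.
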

\noindent Therefore, we have that there exist $C_1, C_2, C_3 > 0$ such that
\begin{gather*}
\vert A((\bu,\bomega),(\bv,\btheta))\vert\le 
C_{1}\Vert(\bu,\bomega)\Vert_{\HCUO^3\times\LO^3}\Vert(\bv,\btheta)\Vert_{
\HCUO^3\times\LO^3},\\
\vert B((\bv,\btheta),q)\vert\le  C_{2}
\Vert(\bv,\btheta)\Vert_{\HCUO^3\times\LO^3}\Vert q\Vert_{0,\O},\qquad 
\vert G(\bv,\btheta)\vert \le C_{3}\Vert(\bv,\btheta)\Vert_{\HCUO^3\times\LO^3},
\end{gather*}
where
$$\Vert(\bv,\btheta)\Vert_{\HCUO^3\times\LO^3}^2=\nnorm{\bv}_{1,\O}^2+\Vert\btheta\Vert_{0,\O}^2.$$
\begin{lemma}\label{lem-elip}
Assume that
\begin{equation}\label{hipo}
\frac{4\Vert\nabla\nu\Vert_{\infty,\O}^2}{\sigma_{\min}\nu_0^2}< 1/4.
\end{equation}
Suppose that $\kappa_1\in(\frac{1}{2},\frac{3}{2})$ and $\kappa_2>\frac{\nu_0}{4}$. 
Then, there exists $\alpha>0$ such that
$$A((\bv,\btheta),(\bv,\btheta))\ge
\alpha\Vert(\bv,\btheta)\Vert_{\HCUO^3\times\LO^3}^2\qquad\forall(\bv,
\btheta)\in\HCUO^3\times\LO^3.$$
\end{lemma}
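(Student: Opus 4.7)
The plan is to evaluate $A((\bv,\btheta),(\bv,\btheta))$ explicitly and then absorb the indefinite cross terms into the positive diagonal contributions via Young's inequality, taking care to choose the Young weights so that the hypothesis \eqref{hipo} and the specified ranges of $\kappa_1,\kappa_2$ are exactly what is needed.

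First I would substitute $(\bu,\bomega)=(\bv,\btheta)$ into \eqref{defa1}. The two skew terms $\int_\O \nu\bomega\cdot\curl\bv$ and $-\int_\O \nu\btheta\cdot\curl\bu$ cancel, leaving
\begin{align*}
A((\bv,\btheta),(\bv,\btheta)) &= \int_\O\nu\K^{-1}\bv\cdot\bv + \int_\O\nu|\btheta|^2 + \kappa_1\nu_0\Vert\curl\bv\Vert_{0,\O}^2 + \kappa_2\Vert\vdiv\bv\Vert_{0,\O}^2\\
&\quad -\kappa_1\nu_0\int_\O\btheta\cdot\curl\bv - 2\int_\O\beps(\bv)\nabla\nu\cdot\bv + \int_\O\btheta\cdot(\nabla\nu\times\bv).
\end{align*}
The first four terms are bounded below by $\nu_0\sigma_{\min}\Vert\bv\Vert_{0,\O}^2+\nu_0\Vert\btheta\Vert_{0,\O}^2+\kappa_1\nu_0\Vert\curl\bv\Vert_{0,\O}^2+\kappa_2\Vert\vdiv\bv\Vert_{0,\O}^2$ using \eqref{nubound} and the permeability bound.

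Next I would estimate the three indefinite terms with Young's inequality. For the first, $|\kappa_1\nu_0\int_\O\btheta\cdot\curl\bv|\le \tfrac{\kappa_1\nu_0}{2}(\Vert\btheta\Vert_{0,\O}^2+\Vert\curl\bv\Vert_{0,\O}^2)$, which gives a clean cancellation against half of the $\kappa_1\nu_0\Vert\curl\bv\Vert_{0,\O}^2$ and a harmless reduction of the $\btheta$ coefficient (acceptable because $\kappa_1<3/2<2$). For the strain term I would use the bound from Lemma \ref{bounds} together with $\Vert\beps(\bv)\Vert_{0,\O}^2\le\Vert\nabla\bv\Vert_{0,\O}^2=\Vert\curl\bv\Vert_{0,\O}^2+\Vert\vdiv\bv\Vert_{0,\O}^2$ (the identity recalled before \eqref{poinc}), followed by Young with weight $\delta$ chosen as $\delta=\nu_0/(4\Vert\nabla\nu\Vert_{\infty,\O})$; this produces a deficit of $\tfrac{\nu_0}{4}(\Vert\curl\bv\Vert_{0,\O}^2+\Vert\vdiv\bv\Vert_{0,\O}^2)$ plus a $\tfrac{4\Vert\nabla\nu\Vert_{\infty,\O}^2}{\nu_0}\Vert\bv\Vert_{0,\O}^2$ term. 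The third cross term is handled by the last estimate in Lemma~\ref{bounds} and Young with weight $\eta$.

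Collecting everything, the resulting coefficients of $\Vert\curl\bv\Vert_{0,\O}^2$ and $\Vert\vdiv\bv\Vert_{0,\O}^2$ are $\tfrac{\kappa_1\nu_0}{2}-\tfrac{\nu_0}{4}$ and $\kappa_2-\tfrac{\nu_0}{4}$, both strictly positive precisely under $\kappa_1>\tfrac{1}{2}$ and $\kappa_2>\tfrac{\nu_0}{4}$. The coefficient of $\Vert\bv\Vert_{0,\O}^2$ is at least $\nu_0\sigma_{\min}-\tfrac{4\Vert\nabla\nu\Vert_{\infty,\O}^2}{\nu_0}-\eta\Vert\nabla\nu\Vert_{\infty,\O}$, which by hypothesis \eqref{hipo} exceeds $\tfrac{3\nu_0\sigma_{\min}}{4}-\eta\Vert\nabla\nu\Vert_{\infty,\O}$. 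The coefficient of $\Vert\btheta\Vert_{0,\O}^2$ is $\tfrac{(2-\kappa_1)\nu_0}{2}-\tfrac{\Vert\nabla\nu\Vert_{\infty,\O}}{\eta}\ge\tfrac{\nu_0}{4}-\tfrac{\Vert\nabla\nu\Vert_{\infty,\O}}{\eta}$ for $\kappa_1<\tfrac{3}{2}$. The main (and only delicate) point is that these last two coefficients can be made simultaneously positive by choosing $\eta$ in the interval $\bigl(\tfrac{4\Vert\nabla\nu\Vert_{\infty,\O}}{\nu_0},\,\tfrac{3\sigma_{\min}\nu_0}{4\Vert\nabla\nu\Vert_{\infty,\O}}\bigr)$, which is non-empty exactly when $\tfrac{4\Vert\nabla\nu\Vert_{\infty,\O}^2}{\sigma_{\min}\nu_0^2}<\tfrac{3}{16}$, a condition guaranteed by \eqref{hipo}. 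Setting $\alpha$ equal to the minimum of the four diagonal coefficients finishes the proof. The only real obstacle is this algebraic juggling: all other steps are direct applications of Lemma~\ref{bounds}, the identity $\Vert\nabla\bv\Vert_{0,\O}^2=\Vert\curl\bv\Vert_{0,\O}^2+\Vert\vdiv\bv\Vert_{0,\O}^2$, and Young's inequality.
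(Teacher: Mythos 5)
Your argument follows essentially the same route as the paper's proof: substitute the diagonal, let the two skew terms cancel, and absorb the three indefinite terms (the $\kappa_1\nu_0\int_\O\btheta\cdot\curl\bv$ term, the strain term via $\Vert\beps(\bv)\Vert_{0,\O}^2\le\Vert\curl\bv\Vert_{0,\O}^2+\Vert\vdiv\bv\Vert_{0,\O}^2$, and the $\int_\O\btheta\cdot(\nabla\nu\times\bv)$ term) into the positive diagonal by Young's inequality; the only difference is that the paper fixes all Young weights at the outset (charging $\tfrac{4\Vert\nabla\nu\Vert_{\infty,\O}^2}{\sigma_{\min}\nu_0}$ to each of the $\Vert\btheta\Vert_{0,\O}^2$, $\Vert\curl\bv\Vert_{0,\O}^2$, $\Vert\vdiv\bv\Vert_{0,\O}^2$ terms and $\tfrac{\sigma_{\min}\nu_0}{4}$ twice to $\Vert\bv\Vert_{0,\O}^2$), so no free parameter remains at the end, whereas you carry $\eta$ through. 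One arithmetic slip to fix: the interval $\bigl(\tfrac{4\Vert\nabla\nu\Vert_{\infty,\O}}{\nu_0},\,\tfrac{3\sigma_{\min}\nu_0}{4\Vert\nabla\nu\Vert_{\infty,\O}}\bigr)$ is non-empty iff $16\Vert\nabla\nu\Vert_{\infty,\O}^2<3\sigma_{\min}\nu_0^2$, i.e.\ $\tfrac{4\Vert\nabla\nu\Vert_{\infty,\O}^2}{\sigma_{\min}\nu_0^2}<\tfrac{3}{4}$, not $<\tfrac{3}{16}$; as written your appeal to \eqref{hipo} would fail (since $\tfrac14>\tfrac{3}{16}$), but with the corrected threshold \eqref{hipo} is more than enough and the proof closes as you intend.
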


\begin{proof}
Given $(\bv,\btheta)\in\HCUO^3\times\LO^3$ first we observe that
as a consequence of Lemma~\ref{bounds}, we have 
\begin{equation*}
\left\vert2\int_{\O}\beps(\bv)\nabla\nu\cdot\bv\right\vert
\le \frac{4\Vert\nabla\nu\Vert_{\infty,\Omega}^2}{\sigma_{\min}\nu_0}(\Vert\curl\bv\Vert_{0,\Omega}^2
+\Vert\vdiv\bv\Vert_{0,\O}^2)+\frac{\sigma_{\min}\nu_0}{4}\Vert\bv\Vert_{0,\Omega}^2,
\end{equation*}
where we have used \eqref{poinc}. Moreover,
using that $\Vert(\nabla\nu\times\bv)\Vert_{0,\Omega}\le
2\Vert\nabla\nu\Vert_{\infty,\Omega}\Vert\bv\Vert_{0,\Omega}$, we obtain
\begin{align*}
\left\vert\int_{\O}\btheta\cdot(\nabla\nu\times\bv)\right\vert
\le& \frac{4\Vert\nabla\nu\Vert_{\infty,\Omega}^2}{\sigma_{\min}\nu_0}\Vert\btheta\Vert_{0,\Omega}^2
+\frac{\sigma_{\min}\nu_0}{4}\Vert\bv\Vert_{0,\Omega}^2,\\
\left\vert\kappa_1\nu_0\int_{\O}\btheta\cdot\curl\bv\right\vert
\le& \frac{\kappa_1\nu_0}{2}\Vert\btheta\Vert_{0,\Omega}^2+\frac{\kappa_1\nu_0}{2}\Vert\curl\bv\Vert_{0,\Omega}^2,
\end{align*}
and these estimates are put in combination with Cauchy-Schwarz inequality
to obtain that
\begin{align*}
A((\bv,\btheta),(\bv,\btheta))\geq & \, \sigma_{\min}\nu_0\Vert\bv\Vert_{0,\O}^2
+\nu_0\Vert\btheta\Vert_{0,\O}^2+\kappa_1\nu_0\Vert\curl\bv\Vert_{0,\O}^2
-\frac{\kappa_1\nu_0}{2}\Vert\curl\bv\Vert_{0,\O}^2-\frac{\kappa_1\nu_0}{2}\Vert\btheta\Vert_{0,\O}^2\\
&+\kappa_2\Vert\vdiv\bv\Vert_{0,\O}^2-\frac{4\Vert\nabla\nu\Vert_{\infty,\Omega}^2}{\sigma_{\min}\nu_0}(\Vert\curl\bv\Vert_{0,\Omega}^2
+\Vert\vdiv\bv\Vert_{0,\O}^2)-\frac{\sigma_{\min}\nu_0}{4}\Vert\bv\Vert_{0,\Omega}^2\\
&-\frac{4\Vert\nabla\nu\Vert_{\infty,\Omega}^2}{\sigma_{\min}\nu_0}\Vert\btheta\Vert_{0,\O}^2-\frac{\sigma_{\min}\nu_0}{4}\Vert\bv\Vert_{0,\O}^2\\
\ge&\frac{\sigma_{\min}\nu_0}{2}\Vert\bv\Vert_{0,\O}^2+\biggl((1-\frac{\kappa_1}{2})\nu_0-\frac{4\Vert\nabla\nu\Vert_{\infty,\Omega}^2}{\sigma_{\min}\nu_0}\biggr)\Vert\btheta\Vert_{0,\O}^2
+\biggl(\frac{\kappa_1\nu_0}{2}-\frac{4\Vert\nabla\nu\Vert_{\infty,\Omega}^2}{\sigma_{\min}\nu_0}\biggr)\Vert\curl\bv\Vert_{0,\O}^2\\
&+\left(\kappa_2-\frac{4\Vert\nabla\nu\Vert_{\infty,\Omega}^2}{\sigma_{\min}\nu_0}\right)\Vert\vdiv\bv\Vert_{0,\O}^2.
\end{align*}
Now, using \eqref{hipo}, we have that
\begin{align*}
A((\bv,\btheta),(\bv,\btheta))
\ge&\frac{\sigma_{\min}\nu_0}{2}\Vert\bv\Vert_{0,\O}^2+\frac{\nu_0}{2}\left(\frac{3}{2}-\kappa_1\right)\Vert\btheta\Vert_{0,\O}^2
+\frac{\nu_0}{2}\left(\kappa_1-\frac{1}{2}\right)\Vert\curl\bv\Vert_{0,\O}^2+\left(\kappa_2-\frac{\nu_0}{4}\right)\Vert\vdiv\bv\Vert_{0,\O}^2\\
\ge&\min\left\{\frac{\sigma_{\min}\nu_0}{2},\frac{\nu_0}{2}\left(\kappa_1-\frac{1}{2}\right),\left(\kappa_2-\frac{\nu_0}{4}\right)\right\}\nnorm{\bv}_{1,\O}^2
+\frac{\nu_0}{2}\left(\frac{3}{2}-\kappa_1\right)\Vert\btheta\Vert_{0,\O}^2\\
\ge&\alpha\Vert(\bv,\btheta)\Vert_{\HCUO^3\times\LO^3}^2,
\end{align*}
where $\alpha$ depends on $\kappa_1,\kappa_2,\nu_0$ and $\sigma_{\min}$.
\end{proof}

Finally, recall the inf-sup condition (cf. \cite{G2014}): there exists $C>0$ only depends on $\O$ such that
\begin{equation}\label{inf-sup-classical}
\sup_{0\ne\bv\in\HCUO^3}\frac{\vert\int_{\O}
q\vdiv\bv\vert}{\Vert\bv\Vert_{1,\O}}\ge C\Vert
  q\Vert_{0,\O}\quad\forall q\in\LOO.
\end{equation}
\begin{lemma}\label{inf-sup-cont}
There exists $\beta>0$, independent of $\nu$, such that
\begin{equation*}
\sup_{0\ne(\bv,\btheta)\in\HCUO^3\times\LO^3}\frac{\vert
  B((\bv,\btheta),q)\vert}{\Vert(\bv,\btheta)\Vert_{\HCUO^3\times\LO^3}}\ge \beta\Vert
  q\Vert_{0,\O}\quad\forall q\in\LOO.
\end{equation*}
\end{lemma}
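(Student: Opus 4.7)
The plan is to reduce the inf-sup condition for $B$ over the product space $\HCUO^3\times\LO^3$ to the classical Stokes inf-sup condition \eqref{inf-sup-classical} by choosing test functions with vanishing vorticity component. The key observation is that $B((\bv,\btheta),q)$ does not depend on $\btheta$, so nothing is gained by testing with $\btheta\neq \0$; on the contrary, a nonzero $\btheta$ only increases the denominator.

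More concretely, fix $q\in\LOO$. For any $\bv\in\HCUO^3$, take the pair $(\bv,\0)\in\HCUO^3\times\LO^3$. Then
\[
B((\bv,\0),q) = -\int_{\O} q\vdiv\bv,\qquad \|(\bv,\0)\|_{\HCUO^3\times\LO^3} = \nnorm{\bv}_{1,\O}.
\]
So the left-hand side of the desired inf-sup is bounded below by
\[
\sup_{0\ne \bv\in\HCUO^3}\frac{|\int_{\O} q\vdiv\bv|}{\nnorm{\bv}_{1,\O}}.
\]
I then use the norm equivalence on $\HCUO^3$ already recorded in the paper: by \eqref{poinc} together with the identity $\|\nabla\bv\|_{0,\O}^2=\|\curl\bv\|_{0,\O}^2+\|\vdiv\bv\|_{0,\O}^2$ and the Poincar\'e inequality, $\nnorm{\cdot}_{1,\O}$ is equivalent to the usual $\H^1$-norm on $\HCUO^3$ with constants depending only on $\O$. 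In particular, there exists $\tilde C>0$ (depending only on $\O$) such that $\nnorm{\bv}_{1,\O}\le \tilde C \|\bv\|_{1,\O}$ for every $\bv\in\HCUO^3$.

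Combining this with the classical Stokes inf-sup condition \eqref{inf-sup-classical}, I obtain
\[
\sup_{0\ne(\bv,\btheta)}\frac{|B((\bv,\btheta),q)|}{\|(\bv,\btheta)\|_{\HCUO^3\times\LO^3}} \;\ge\; \sup_{0\ne \bv\in\HCUO^3}\frac{|\int_{\O} q\vdiv\bv|}{\tilde C\|\bv\|_{1,\O}} \;\ge\; \frac{C}{\tilde C}\,\|q\|_{0,\O},
\]
so the conclusion holds with $\beta:= C/\tilde C$, which depends only on $\O$ and in particular is independent of $\nu$.

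There is no real obstacle here; the only thing to be careful about is the comparison between $\nnorm{\cdot}_{1,\O}$ and the usual $\H^1$-norm, which has already been established right after \eqref{poinc}. The proof is essentially a one-line reduction to \eqref{inf-sup-classical} with the test pair $(\bv,\0)$, together with the norm equivalence on $\HCUO^3$.
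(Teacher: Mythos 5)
Your proof is correct and follows essentially the same route as the paper: restrict to test pairs of the form $(\bv,\0)$ and transfer the classical Stokes inf-sup condition \eqref{inf-sup-classical} using the bound $\nnorm{\bv}_{1,\O}\le \tilde C\Vert\bv\Vert_{1,\O}$ (the paper uses $\tilde C=1$, which holds since by the identity $\Vert\nabla\bv\Vert_{0,\O}^2=\Vert\curl\bv\Vert_{0,\O}^2+\Vert\vdiv\bv\Vert_{0,\O}^2$ the two norms actually coincide on $\HCUO^3$). No issues.
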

\begin{proof}
The result is a consequence of \eqref{inf-sup-classical}
and the fact that 
$$\nnorm{\bv}_{1,\O}\le\Vert\bv\Vert_{1,\O},$$
where the term in the righ-hand side has the usual norm in $\HCUO^3$.
\end{proof}

\noindent All these steps lead to the unique solvability of the problem. 
\begin{theorem}
There exists a unique solution
$((\bu,\bomega),p)\in(\HCUO^3\times\LO^3)\times\LOO$
to \eqref{probform2} and there exists
a constant $C>0$ such that
the following continuous dependence result holds:
$$ \Vert((\bu,\bomega),p)\Vert_{(\HCUO^3\times\LO^3)\times\LO}
\le C\Vert\ff\Vert_{0,\O}.$$
\end{theorem}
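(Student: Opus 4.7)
The plan is a direct invocation of the classical Babu\v ska--Brezzi theorem for mixed saddle-point problems (as stated e.g.\ in \cite{bbf-2013,G2014}). Indeed, problem \eqref{probform2} is already cast in the canonical mixed form with bilinear forms $A$ on $(\HCUO^3\times\LO^3)^2$ and $B$ on $(\HCUO^3\times\LO^3)\times\LOO$, and the three hypotheses that the abstract theorem requires have all been verified in the preceding material.

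First, I would quote the continuity bounds for $A$, $B$ and $G$ collected immediately after Lemma~\ref{bounds}, expressed in terms of the product norm $\Vert(\bv,\btheta)\Vert_{\HCUO^3\times\LO^3}^2=\nnorm{\bv}_{1,\O}^2+\Vert\btheta\Vert_{0,\O}^2$. Next, under the smallness assumption \eqref{hipo} on $\Vert\nabla\nu\Vert_{\infty,\O}$ and the choices $\kappa_1\in(1/2,3/2)$, $\kappa_2>\nu_0/4$, Lemma~\ref{lem-elip} supplies ellipticity of $A$ on the \emph{entire} product space, which is strictly stronger than the kernel coercivity usually demanded by Brezzi's theorem. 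Finally, Lemma~\ref{inf-sup-cont} provides the inf-sup condition for $B$ with constant $\beta>0$ independent of $\nu$. The Babu\v ska--Brezzi theorem then yields existence and uniqueness of $((\bu,\bomega),p)\in(\HCUO^3\times\LO^3)\times\LOO$ solving \eqref{probform2}.

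For the continuous dependence, the abstract theory supplies a bound of the shape
\[
\Vert(\bu,\bomega)\Vert_{\HCUO^3\times\LO^3}+\Vert p\Vert_{0,\O}\le C\,\Vert G\Vert_{\ast},
\]
since the right-hand side of the second equation in \eqref{probform2} vanishes. Combining this with $\vert G(\bv,\btheta)\vert\le\Vert\ff\Vert_{0,\O}\Vert\bv\Vert_{0,\O}\le\Vert\ff\Vert_{0,\O}\Vert(\bv,\btheta)\Vert_{\HCUO^3\times\LO^3}$ from Lemma~\ref{bounds} yields the advertised estimate, with a constant $C$ depending only on the coercivity constant $\alpha$, the inf-sup constant $\beta$, and the continuity constant of $A$.

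No substantive obstacle remains for this final step: every delicate point (the non-symmetric cross terms induced by $\nabla\nu$, and the interplay between the augmentation parameters $\kappa_1,\kappa_2$ and the bounds on $\nu$ needed to absorb these cross terms) has already been addressed in the proof of Lemma~\ref{lem-elip}, so the theorem follows by an essentially mechanical application of the abstract framework.
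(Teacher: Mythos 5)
Your proposal is correct and follows exactly the paper's route: the authors likewise dispose of the theorem by citing Lemmas~\ref{lem-elip} and~\ref{inf-sup-cont} together with the continuity bounds and invoking the abstract Babu\v ska--Brezzi theorem (\cite[Thm.~II.1.1]{bbf-2013}), the continuous dependence estimate being part of that abstract result combined with the bound on $G$ from Lemma~\ref{bounds}. Your write-up merely spells out the three hypotheses more explicitly than the paper's one-line proof does.
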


\begin{proof}
By virtue of Lemmas~\ref{lem-elip} and \ref{inf-sup-cont},
the proof is a straightforward application of
\cite[Thm.~II.1.1]{bbf-2013}.
\end{proof}

\section{Finite element discretisations}\label{sec:FEM}
Taking generic subspaces for the approximation of velocity, vorticity, and pressure, a 
Galerkin scheme associated with \eqref{probform2} reads: 
{\em Find $((\bu_h,\bomega_h),p_h)\in(\H_h\times\Z_h)\times\Q_h$ such that}
\begin{equation}\label{probdisc}
\begin{split}
A((\bu_h,\bomega_h),(\bv_h,\btheta_h))+B((\bv_h,\btheta_h),p_h)=&\;G(\bv_h,
\btheta_h)\quad\forall(\bv_h,\btheta_h)\in\H_h\times\Z_h,\\
B((\bu_h,\bomega_h),q_h)=&\;0\qquad\forall q_h\in\Q_h.
\end{split}
\end{equation}
We can adopt in particular 
\begin{align}
\H_h&:=\{\bv_h \in \HO^3:
\bv_h|_{T} \in \bbP_{k+1}(T)^3, \ \forall T \in \CT_{h} \}\cap\HCUO^3,\label{esp1}\\
\Z_h&:=\left\{\btheta_{h}\in \LO^3: \btheta_h|_{T}\in\bbP_{\ell}(T)^3, \ \forall T \in \CT_{h}  \right\},\label{esp2}\\
\Q_h&:=\{ q_h\in \HO: q_h|_{T} \in \bbP_{k}(T) , \ \forall T \in \CT_{h} \}\cap\LOO\label{esp3},
\end{align}
where $k\geq 1, \ell \geq 0$. Here 
$\{\cT_{h}(\O)\}_{h>0}$ is a shape-regular
family of partitions of 
$\bar\O$ by tetrahedra $T$ of diameter $h_T$. The meshsize is 
$h:=\max\{h_T:\; T\in\cT_{h}(\O)\}$, and $\bbP_m(S)$ denotes the space of polynomials 
with total degree up to $m$, defined on a generic set $S$. 

We recall that $\H_h\times\Q_h$ in the generalised Hood-Taylor finite element
pair for the Stokes equations \cite{HT}. As we will see, the schemes 
coming from \eqref{probdisc}-\eqref{esp3} are well-posed for any approximation 
order of the discrete vorticity $\ell$ (and being continuous or discontinuous polynomials); 
however, an appropriate choice is to take $\ell = k$ and discontinuous elements, which 
deliver a consistent overall rate of convergence for all unknowns. 

Next, we proceed to show that the proposed method is stable and convergent. 
\begin{lemma}\label{coerdisc}
Assuming \eqref{hipo}, and choosing 
$\kappa_1\in\left(\frac{1}{2},\frac{3}{2}\right)$ and $\kappa_2>\frac{\nu_0}{4}$, 
 there exists $\alpha>0$, such that
$$A((\bv_h,\btheta_h),(\bv_h,\btheta_h))\ge
\alpha\Vert(\bv_h,\btheta_h)\Vert_{\HCUO^3\times\LO^3}^2\qquad\forall(\bv_h,
\btheta_h)\in\H_h\times\Z_h.$$
\end{lemma}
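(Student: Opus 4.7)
The plan is to recognise that Lemma~\ref{coerdisc} is really just the restriction of Lemma~\ref{lem-elip} to the discrete subspace, so the entire argument from the continuous case can be recycled verbatim. The key preliminary observation is that the chosen finite element spaces are conforming: by the construction in \eqref{esp1}--\eqref{esp2} one has $\H_h \subset \HCUO^3$ and $\Z_h \subset \LO^3$, hence any $(\bv_h,\btheta_h) \in \H_h \times \Z_h$ belongs to $\HCUO^3 \times \LO^3$ and inherits all the functional-analytic estimates used in the continuous setting.

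Next I would inspect the proof of Lemma~\ref{lem-elip} and note that every tool invoked there is purely a statement at the level of the continuous spaces: the identity $\Vert\nabla\bv\Vert_{0,\O}^2 = \Vert\curl\bv\Vert_{0,\O}^2 + \Vert\vdiv\bv\Vert_{0,\O}^2$, the Poincar\'e-type inequality \eqref{poinc}, the pointwise bound $\Vert\nabla\nu\times\bv\Vert_{0,\O} \le 2\Vert\nabla\nu\Vert_{\infty,\O}\Vert\bv\Vert_{0,\O}$, the uniform bounds \eqref{nubound} on the viscosity and on $\K^{-1}$, together with Cauchy--Schwarz and Young's inequalities. None of these rely on $\bv$ or $\btheta$ being continuous functions beyond belonging to $\HCUO^3$ and $\LO^3$ respectively, so they apply with no modification to $(\bv_h,\btheta_h)$.

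The third step is then mechanical: under the hypothesis \eqref{hipo} on $\nu$ and with the stated choices $\kappa_1 \in (\tfrac12,\tfrac32)$ and $\kappa_2 > \tfrac{\nu_0}{4}$, I would reproduce the same chain of Young-type splittings as in Lemma~\ref{lem-elip} applied to $(\bv_h,\btheta_h)$, to arrive at
\begin{equation*}
A((\bv_h,\btheta_h),(\bv_h,\btheta_h)) \ge \alpha \Vert(\bv_h,\btheta_h)\Vert_{\HCUO^3\times\LO^3}^2,
\end{equation*}
with the same constant $\alpha = \alpha(\kappa_1,\kappa_2,\nu_0,\sigma_{\min}) > 0$ that appears in the continuous ellipticity result.

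Given the conformity, I do not anticipate a genuine obstacle; the only subtle point worth flagging explicitly is that the Poincar\'e-type inequality \eqref{poinc} must legitimately be applied to the discrete velocity, which is exactly why the conforming choice $\H_h \subset \HCUO^3$ in \eqref{esp1} is essential. If the vorticity space were also required to be conforming in some stronger sense, one would have to verify that, but here $\Z_h$ is only asked to sit in $\LO^3$ and this is manifestly guaranteed by \eqref{esp2}.
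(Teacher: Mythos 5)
Your proposal is correct and matches the paper's (implicit) argument exactly: the paper states Lemma~\ref{coerdisc} without proof precisely because $\H_h\subset\HCUO^3$ and $\Z_h\subset\LO^3$ are conforming, so the ellipticity bound of Lemma~\ref{lem-elip} restricts verbatim to the discrete pair with the same constant $\alpha$. Your identification of the conformity of $\H_h$ as the one point needing explicit mention (so that \eqref{poinc} applies to discrete velocities) is the right observation.
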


\begin{remark}
The values for the augmentation parameters $\kappa_1$ and $\kappa_2$ 
are chosen such that the largest ellipticity constant
in Lemma~\ref{coerdisc} is achieved. This means that we take $\kappa_1=1$
(the middle point of the relevant interval, see e.g. \cite[Sect. 3]{anaya15b}) 
and $\kappa_2=\frac{\nu_0}{2}$.
\end{remark}

\noindent Moreover, since for the pair of spaces \eqref{esp1},\eqref{esp3} one has an inf-sup condition of the form 
\begin{equation}\label{eq:inf-sup-b2-h}
\sup_{\stackrel{\scriptstyle\bv_h\in\H_h}{\bv_h\ne0}}\frac{\int_{\O}q_h\vdiv\bv_h}{\Vert\bv_h\Vert_{1,\O}}\ge
\tilde\beta_2\Vert q_h\Vert_{0,\O}\quad\forall q_h\in\Q_h,
\end{equation}
where $\tilde\beta_2$ is independent
of $h$ (see \cite{Boffi94,Boffi97}), then it is straightforward to prove the following result. 
\begin{lemma}\label{infsupdisc}
There exists $\tilde{\beta}>0$, such that
$$\sup_{\stackrel{\scriptstyle(\bv_h,\btheta_h)\in\H_h\times\Z_h}{(\bv_h,
\btheta_h)\ne0}}
\frac{\vert B((\bv_h,\btheta_h),q_h)\vert}{\Vert(\bv_h,\btheta_h)\Vert_{\HCUO^3\times\LO^3}
}\ge
\tilde{\beta}\Vert q_h\Vert_{0,\O}\quad\forall q_h\in\Q_h.$$ 
\end{lemma}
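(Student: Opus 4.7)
The plan is to reduce the claim to the classical Hood--Taylor inf-sup estimate \eqref{eq:inf-sup-b2-h}, exploiting the fact that $B$ does not involve the vorticity component at all.

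First I would fix an arbitrary $q_h\in\Q_h$ and restrict the supremum on the left-hand side to test pairs of the special form $(\bv_h,\cero)$ with $\bv_h\in\H_h\setminus\{\cero\}$. This restriction can only decrease the supremum, so
\[
\sup_{(\bv_h,\btheta_h)\neq\cero}\frac{|B((\bv_h,\btheta_h),q_h)|}{\Vert(\bv_h,\btheta_h)\Vert_{\HCUO^3\times\LO^3}}
\ge \sup_{\bv_h\neq\cero}\frac{\bigl|\int_{\O}q_h\vdiv\bv_h\bigr|}{\nnorm{\bv_h}_{1,\O}},
\]
since $\Vert(\bv_h,\cero)\Vert_{\HCUO^3\times\LO^3}=\nnorm{\bv_h}_{1,\O}$ by the definition of the product norm.

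Next I would invoke the equivalence (in fact equality, via \eqref{curl-curl} applied componentwise) $\nnorm{\bv_h}_{1,\O}\le\Vert\bv_h\Vert_{1,\O}$ already recorded in the proof of Lemma~\ref{inf-sup-cont}, which enlarges the quotient:
\[
\sup_{\bv_h\neq\cero}\frac{\bigl|\int_{\O}q_h\vdiv\bv_h\bigr|}{\nnorm{\bv_h}_{1,\O}}
\ge \sup_{\bv_h\neq\cero}\frac{\bigl|\int_{\O}q_h\vdiv\bv_h\bigr|}{\Vert\bv_h\Vert_{1,\O}}.
\]
At this point the classical discrete inf-sup condition \eqref{eq:inf-sup-b2-h} for the Hood--Taylor pair $\H_h\times\Q_h$ immediately delivers the lower bound $\tilde\beta_2\Vert q_h\Vert_{0,\O}$, so we conclude with $\tilde\beta:=\tilde\beta_2$ independent of $h$.

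I do not anticipate any real obstacle here: the only subtlety is noting that $B$ is insensitive to $\btheta_h$, so enriching the discrete velocity space with a vorticity component of arbitrary polynomial degree cannot destroy the inf-sup property inherited from Hood--Taylor; the two inequalities above merely pass from the natural Stokes norm on $\H_h$ to the triple-bar norm used in our formulation.
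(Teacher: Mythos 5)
Your argument is correct and is precisely the route the paper intends: the paper leaves the proof as ``straightforward'' from \eqref{eq:inf-sup-b2-h}, and its own proof of the continuous analogue (Lemma~\ref{inf-sup-cont}) uses exactly your two steps --- restrict to test pairs $(\bv_h,\cero)$ and pass from $\nnorm{\cdot}_{1,\O}$ to $\Vert\cdot\Vert_{1,\O}$ before invoking the Hood--Taylor inf-sup condition. Nothing is missing.
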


\noindent Recall now that the Lagrange interpolant $\Pi:\HusO^3\to\H_h$ 
satisfies the following error estimate:
There exists $C>0$, independent of $h$, such that
for all $s\in(1/2,k+1]$:
\begin{equation}\label{prop2lagrange}
\|\bv-\Pi\bv\|_{1,\O}
\le Ch^{s}\|\bv\|_{1+s,\O}\quad\forall\bv\in\HusO^3.
\end{equation}
Likewise, denoting by $\CP$ the orthogonal projection from $\LO$ (or from $\LO^3$)
onto the subspace $\Q_h$ (or onto the subspace $\Z_h$), we have 
an estimate valid for all $s>0$:
\begin{equation}\label{cotar}
\Vert q-\CP q\Vert_{0,\O}\le Ch^{s}\Vert q\Vert_{s,\O}\quad\forall q\in\HsO.
\end{equation}

Thanks to Lemmas \ref{coerdisc} and \ref{infsupdisc}, we 
can state the stability and C\'ea estimate of the method as follows. 
\begin{theorem}
Let  $\H_h$, $\Z_h$ and $\Q_h$ be specified as 
in \eqref{esp1}, \eqref{esp2} and \eqref{esp3}, respectively.
Then, there exists a unique $((\bu_h,\bomega_h),p_h)\in(\H_h\times\Z_h)\times\Q_h$
solution of the Galerkin scheme~\eqref{probdisc}.
Furthermore, there exist positive constants $\hat{C}_1,\,\hat{C}_2>0$,
independent of $h$, such that
\begin{equation}\label{eq:stability-h}
\Vert(\bu_h,\bomega_h)\Vert_{\HCUO^3\times\LO^3}+\Vert p_h\Vert_{0,\O}\le
\hat{C}_1\Vert\ff\Vert_{0,\O},
\end{equation}
and
\begin{align}\label{ceaest}
\begin{split}
&\Vert(\bu,\bomega)-(\bu_h,\bomega_h)\Vert_{\HCUO^3\times\LO^3}
+\Vert p-p_h\Vert_{0,\O}\\
&\qquad\qquad\qquad\le\hat{C}_2\inf_{(\bv_h,\btheta_h,q_h)\in\H_h\times\Z_h\times\Q_h}
(\nnorm{\bu-\bv_h}_{1,\O}+\Vert \bomega-\btheta_h\Vert_{0,\O}+\Vert p-q_h\Vert_{0,\O}),
\end{split}
\end{align}
where $((\bu,\bomega),p)\in(\HCUO^3\times\LO^3)\times\LOO$ is the
unique solution to variational problem \eqref{probform2}.
\end{theorem}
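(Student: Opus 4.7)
The plan is to reduce the theorem to a direct application of the discrete Babu\v ska--Brezzi framework, using the two pillars already in place: the discrete coercivity of $A$ on $\H_h\times\Z_h$ given by Lemma~\ref{coerdisc}, and the discrete inf-sup condition for $B$ given by Lemma~\ref{infsupdisc}. Continuity of $A$, $B$ and $G$ on the discrete spaces is inherited from the continuous bounds collected in Lemma~\ref{bounds}, with constants independent of $h$ since the discrete norms coincide with the restrictions of the continuous ones to $\H_h\times\Z_h\subset \HCUO^3\times\LO^3$. Together these four ingredients fall exactly within the hypotheses of \cite[Thm.~II.1.1]{bbf-2013} applied to the finite-dimensional saddle-point problem \eqref{probdisc}, which delivers both existence and uniqueness of $((\bu_h,\bomega_h),p_h)$ and the stability bound \eqref{eq:stability-h}, with $\hat C_1$ depending only on $\alpha$, $\tilde\beta$, and on the continuity constants $C_1,C_2,C_3$.

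For the Céa estimate \eqref{ceaest} I would follow the standard route for conforming mixed approximations of saddle-point problems: take arbitrary $(\bv_h,\btheta_h,q_h)\in\H_h\times\Z_h\times\Q_h$, write the errors as
\[
\bu-\bu_h=(\bu-\bv_h)+(\bv_h-\bu_h),\quad \bomega-\bomega_h=(\bomega-\btheta_h)+(\btheta_h-\bomega_h),\quad p-p_h=(p-q_h)+(q_h-p_h),
\]
and use Galerkin orthogonality (obtained by subtracting \eqref{probdisc} from \eqref{probform2} on discrete test functions, which is legitimate since $\H_h\times\Z_h\times\Q_h \subset \HCUO^3\times\LO^3\times\LOO$). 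Testing the orthogonality relation with the discrete error $(\bv_h-\bu_h,\btheta_h-\bomega_h)$ and exploiting discrete ellipticity controls the discrete part of the velocity--vorticity error in terms of the interpolation errors (here the pressure difference $q_h-p_h$ is absorbed via the discrete inf-sup and the term $B((\bv_h-\bu_h,\btheta_h-\bomega_h),q_h-p_h)$ is handled by continuity of $B$); afterwards, applying Lemma~\ref{infsupdisc} once more with the momentum residual recovers the bound on $\Vert q_h-p_h\Vert_{0,\O}$. Combining these with a triangle inequality and taking the infimum over discrete triples yields \eqref{ceaest}.

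The only point requiring some care is keeping track of the non-symmetric cross terms in $A$ arising from the variable viscosity, namely $-\int_\O\nu\btheta\cdot\curl\bu$ and $\int_\O\bomega\cdot(\nabla\nu\times\bv)-2\int_\O\beps(\bu)\nabla\nu\cdot\bv$; however, all of them were already bounded in Lemma~\ref{bounds} by the product of the two relevant discrete norms, so they contribute only to the constant $\hat C_2$ and not to the structure of the argument. I expect this to be the main (and only mild) obstacle, since once these terms are controlled, the proof reduces to the abstract Babu\v ska--Brezzi machinery applied in the finite-dimensional setting, with all constants independent of $h$ thanks to Lemmas~\ref{coerdisc}--\ref{infsupdisc}.
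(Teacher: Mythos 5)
Your proposal is correct and follows essentially the same route as the paper: the authors state the theorem as a direct consequence of the discrete ellipticity (Lemma~\ref{coerdisc}) and the discrete inf-sup condition (Lemma~\ref{infsupdisc}) via the abstract Babu\v ska--Brezzi theory for conforming Galerkin schemes, exactly as you do. Your additional detail on the C\'ea estimate (Galerkin orthogonality, splitting of the errors, absorbing the pressure term through the discrete inf-sup) is the standard argument the paper leaves implicit, and it is carried out correctly.
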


And finally the convergence of the augmented scheme can be formulated as follows. 
\begin{theorem}
Let $\H_h,\Z_h$ and $\Q_h$
be given by \eqref{esp1}, \eqref{esp2}, and \eqref{esp3}, respectively, setting $\ell=k$
with $k\ge1$.
Let $(\bu,\bomega,p)\in\HCUO^3\times\LO^3\times\LOO$ and
$(\bu_h,\bomega_h,p_h)\in\H_h\times\Z_h\times\Q_h$ be the unique solutions
to the continuous and discrete problems \eqref{probform2} and
\eqref{probdisc}, respectively.  Assume that
$\bu\in\HusO^3$, $\bomega\in\HsO^3$ and $p\in\HsO$, for some
$s\in(1/2,k+1]$. Then, there exists $\hat{C}>0$, independent of $h$, such
  that
\begin{equation}\label{convergence}
\Vert(\bu,\bomega)-(\bu_h,\bomega_h)\Vert_{\HCUO^3\times\LO^3}
+\Vert p-p_h\Vert_{0,\O}\leq\hat{C}h^{s}(\Vert\bu\Vert_{\HusO^3}
+\|\bomega\|_{\HsO^3}+\Vert p\Vert_{\HsO}).\end{equation}
\end{theorem}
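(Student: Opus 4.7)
The plan is to combine the quasi-optimality estimate \eqref{ceaest} from the preceding theorem with the approximation properties \eqref{prop2lagrange} and \eqref{cotar}, by selecting canonical interpolants of $\bu$, $\bomega$ and $p$ as the test functions inside the infimum. Since the continuous and discrete problems are already known to be well-posed, it only remains to produce $h^s$ bounds for the three approximation errors on the right-hand side of \eqref{ceaest}.

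More concretely, I would proceed as follows. First, I would take $\bv_h := \Pi\bu \in \H_h$, which is meaningful because the assumption $\bu\in\HusO^3$ with $s>1/2$ guarantees that the Lagrange interpolant is well-defined. Second, I would take $\btheta_h := \CP\bomega \in \Z_h$ and $q_h := \CP p \in \Q_h$, where $\CP$ denotes the $\L^2$-orthogonal projection onto the corresponding discrete space (as defined before \eqref{cotar}); note that $\CP p \in \Q_h \subset \LOO$ since $p\in\LOO$ and $\CP$ preserves the zero-mean property when the range already lies in $\LOO$.

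Next I would estimate each of the three terms appearing in \eqref{ceaest}. For the velocity contribution, the key observation is that $\nnorm{\bv}_{1,\O}\le\Vert\bv\Vert_{1,\O}$ on $\HCUO^3$ (a fact already used in the proof of Lemma~\ref{inf-sup-cont}), so \eqref{prop2lagrange} immediately yields $\nnorm{\bu-\Pi\bu}_{1,\O}\le C h^{s}\Vert\bu\Vert_{1+s,\O}$ for $s\in(1/2,k+1]$. For the vorticity and pressure contributions, a direct application of \eqref{cotar} (with $\ell=k$, so that the polynomial degree of $\Z_h$ accommodates the same exponent) gives $\Vert\bomega-\CP\bomega\Vert_{0,\O}\le Ch^{s}\Vert\bomega\Vert_{s,\O}$ and $\Vert p-\CP p\Vert_{0,\O}\le Ch^{s}\Vert p\Vert_{s,\O}$. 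Inserting these three bounds into \eqref{ceaest} and absorbing all constants into a single $\hat{C}$ delivers \eqref{convergence}.

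I do not anticipate any serious obstacle: the real work has been done in establishing Lemmas~\ref{coerdisc} and \ref{infsupdisc} (which underpin \eqref{ceaest}) and in selecting the discrete spaces so that $\ell=k$ matches the interpolation order. The only minor point that deserves care is the compatibility between the triple norm $\nnorm{\cdot}_{1,\O}$ used in the error functional and the standard $\H^1$-norm used in \eqref{prop2lagrange}, but the inequality $\nnorm{\bv}_{1,\O}\le\Vert\bv\Vert_{1,\O}$ disposes of this issue cleanly. The same argument would cover any other pair of inf-sup stable Stokes elements complemented with a conforming vorticity space, and the rate is optimal for all three unknowns provided one chooses $\ell=k$.
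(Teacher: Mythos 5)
Your proposal is correct and follows exactly the route the paper takes: the paper's proof is the one-line remark that the result ``follows from \eqref{eq:stability-h}--\eqref{ceaest} and \eqref{prop2lagrange}--\eqref{cotar}'', which is precisely the C\'ea estimate combined with the Lagrange interpolant for $\bu$ and the $\L^2$-projections for $\bomega$ and $p$. Your write-up simply makes explicit the choices inside the infimum and the use of $\nnorm{\bv}_{1,\O}\le\Vert\bv\Vert_{1,\O}$, which the paper leaves implicit.
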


\begin{proof}
It follows from \eqref{eq:stability-h}-\eqref{ceaest} and \eqref{prop2lagrange}-\eqref{cotar}.
\end{proof}

\begin{remark}\label{mini}
Instead of Hood--Taylor finite elements \eqref{esp1},\eqref{esp3}, 
we  can also consider any other Stokes inf-sup stable pairs. For instance, using  
the MINI-element for velocity and pressure (piecewise linear velocities enriched 
with quartic bubbles, or cubic bubbles in 2D, and piecewise linear and continuous pressures, see e.g. \cite{bbf-2013}) and piecewise constant 
elements for vorticity, we can easily adapt the analysis  to obtain the 
error estimate 
$$\Vert(\bu,\bomega)-(\bu_h,\bomega_h)\Vert_{\HCUO^3\times\LO^3}
+\Vert p-p_h\Vert_{0,\O}\leq\hat{C}h^s(\Vert\bu\Vert_{\HusO^3}
+\|\bomega\|_{\HsO}+\Vert p\Vert_{\HsO}).$$
\end{remark}

\section{Numerical results}\label{sec:results}
\begin{figure}[h]
\begin{center}
\includegraphics[width=0.325\textwidth]{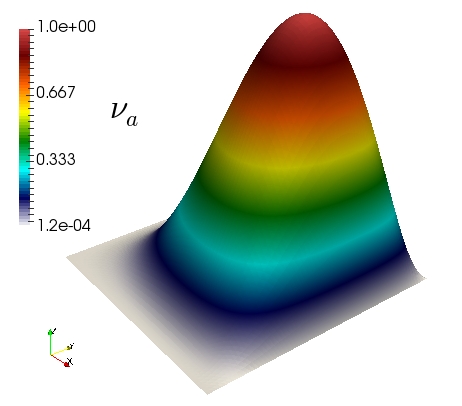}
\includegraphics[width=0.325\textwidth]{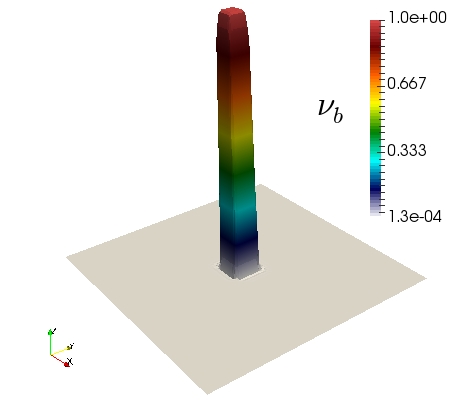}\\
\includegraphics[width=0.325\textwidth]{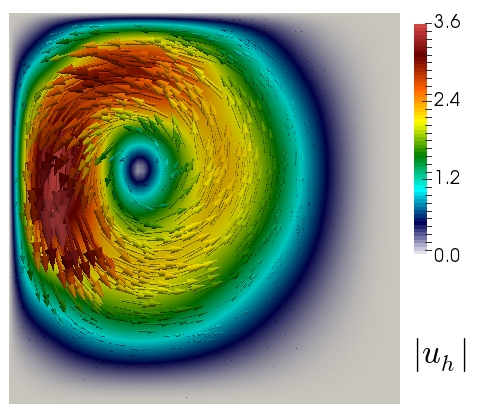}
\includegraphics[width=0.325\textwidth]{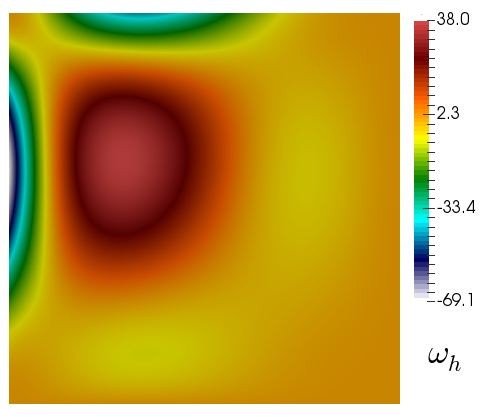}
\includegraphics[width=0.325\textwidth]{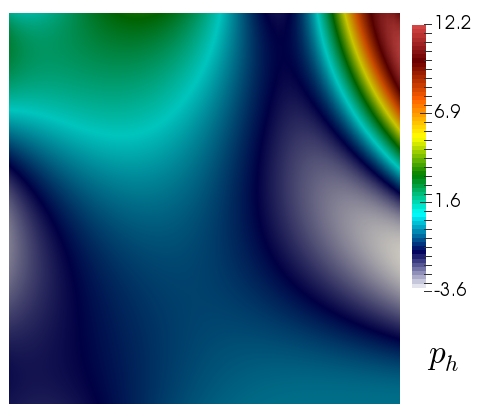}
\end{center}
\vspace{-3mm}
\caption{Smooth and steep viscosity profiles $\nu_a,\nu_b$ (top), 
and approximate solutions generated, for $\nu_b$, employing a second-order scheme.}\label{fig:ex01}
\end{figure}
 
We proceed to verify numerically the 
 convergence rates predicted by \eqref{convergence}.  Following \cite{john15}, 
 on $\Omega = (0,1)^2$   
 we take $\phi(x,y) = 1000x^2(1-x)^4y^3(1-y)^2$ and define exact velocity, vorticity, and pressure as 
 $$\bu = (\partial_y\phi,-\partial_x\phi)^{\tt t}, \quad \bomega  = \curl \bu,\quad p(x,y) = \pi^2 (xy^2\cos(2\pi x^2y) - x^2y\sin(2\pi xy)) - \frac{1}{8},$$
 which satisfy the incompressibility constraint as well as the homogeneous boundary and compatibility conditions. Two specifications 
 for viscosity are considered, with a mild and with a higher gradient
 $$\nu_a(x,y) = \nu_0 + (\nu_1 - \nu_0) x^2 (1-x)y^2(1-y)\frac{721}{16}, \quad \nu_b(x,y) = \nu_0 + (\nu_1 - \nu_0) \exp(-10^{13}[(x-0.5)^{10}+(y-0.5)^{10}]),$$
 and we use $\nu_0 = 10^{-4}$, $\nu_1=1$. A current restriction in our analysis is \eqref{hipo} that only permits sufficiently small permeability such that the lower bound for its inverse, $\sigma_{\min}$ is  large enough (in any case, for most relevant applications in porous media flow these values are reasonable). 
 We use $\K = 10^{-6}\bI$.  
 Sample solutions are shown in Figure~\ref{fig:ex01} and 
the convergence history (produced on a sequence of successively refined meshes and computing errors for all fields and rates as usual) 
is presented in Table~\ref{table:ex01}. At least for these two cases, we observe a higher convergence of the pressure and 
that a steeper viscosity does not affect the accuracy. 

\begin{table}[h]
\begin{center}
{\small \begin{tabular}{|l|c|c|c|c|c|c|c|}
\hline
DoF  & $h$ &   $\|\bu-\bu_h\|_{1,\O}$  &  \texttt{rate}  &    $\|\bomega-\bomega_h\|_{0,\Omega}$  &  \texttt{rate} &  $\|p-p_h\|_{0,\O}$  &  \texttt{rate}  \\  
\hline
\multicolumn{8}{|c|}{smooth viscosity $\nu_a$}\\
\hline
84 & 0.7071 & 11.233 &   --  & 10.580 &   -- & 2126 &    -- \\
284 & 0.3536 & 4.4150 & 1.347 & 3.6531 & 1.524 & 1194 & 0.832 \\
1044 & 0.1768 & 1.2351 & 1.838 & 1.0024 & 1.863 & 271.24 & 2.136 \\
4004 & 0.0884 & 0.3092 & 1.999 & 0.2482 & 2.016 & 44.490 & 2.609 \\
15684 & 0.0442 & 0.0767 & 2.011 & 0.0609 & 2.027 & 6.2553 & 2.732 \\
62084 & 0.0221 & 0.0191 & 2.005 & 0.0150 & 2.015 & 0.8594 & 2.525 \\
247044 & 0.0111 & 0.0047 & 1.999 & 0.0037 & 2.008 & 0.2503 & 2.318 \\
\hline
\multicolumn{8}{|c|}{steeper viscosity $\nu_b$}\\
\hline
84 & 0.7071 & 11.233 &    -- & 10.581 &   -- & 2125 &    -- \\
284 & 0.3536 & 4.4150 & 1.347 & 3.6528 & 1.524 & 1193 & 0.832\\
1044 & 0.1768 & 1.2350 & 1.837 & 1.0024 & 1.862 & 271.25 & 2.136\\
4004 & 0.0884 & 0.3093 & 1.998 & 0.2484 & 2.016 & 44.491 & 2.609\\
15684 & 0.0442 & 0.0767 & 2.011 & 0.0609 & 2.027 & 6.2553 & 2.731\\
62084 & 0.0221 & 0.0191 & 2.005 & 0.0151 & 2.016 & 0.8603 & 2.437\\
247044 & 0.0111& 0.0048 & 1.999 & 0.0037 & 2.008 & 0.2487 & 2.290\\
\hline
\end{tabular}}\end{center}
\caption{Error history associated to the augmented scheme using \eqref{esp1}-\eqref{esp3} with 
$k = \ell = 1$.} \label{table:ex01}
\end{table}

We close with a 3D example simulating the cavity flow in the presence of a viscosity boundary 
layer. The domain $\Omega = (0,1)^3$ is discretised with a structured tetrahedral mesh 
and we employ the scheme from Remark~\ref{mini} (the 
MINI-element for the velocity-pressure pair together with piecewise 
constant vorticity approximation) resulting in a system with 560165 DoF. We use $\ff=\cero$ and the velocity $\bu = (1,0,0)^{\tt t}$ 
is prescribed on the top lid (at $z=1$) while no-slip velocities are set on the other sides of the boundary. We set 
$\K = 10^{-4}\bI$, and 
choosing now $\nu_0 = 10^{-5}$, $\nu_1=10$, 
the variable viscosity field is  
$$\nu= \nu_0 + (\nu_1 - \nu_0) \exp(-10^{3}[(x-0.1)^6+(y-0.5)^6+(z-0.5)^6]).$$ 
The approximate solutions are depicted in Figure~\ref{fig:ex02} where we observe how the velocity and pressure lose the usual 
symmetry expected in lid-driven cavity flows, and it separates due to the viscosity boundary layer. 

\begin{figure}[h]
\begin{center}
\includegraphics[width=0.245\textwidth]{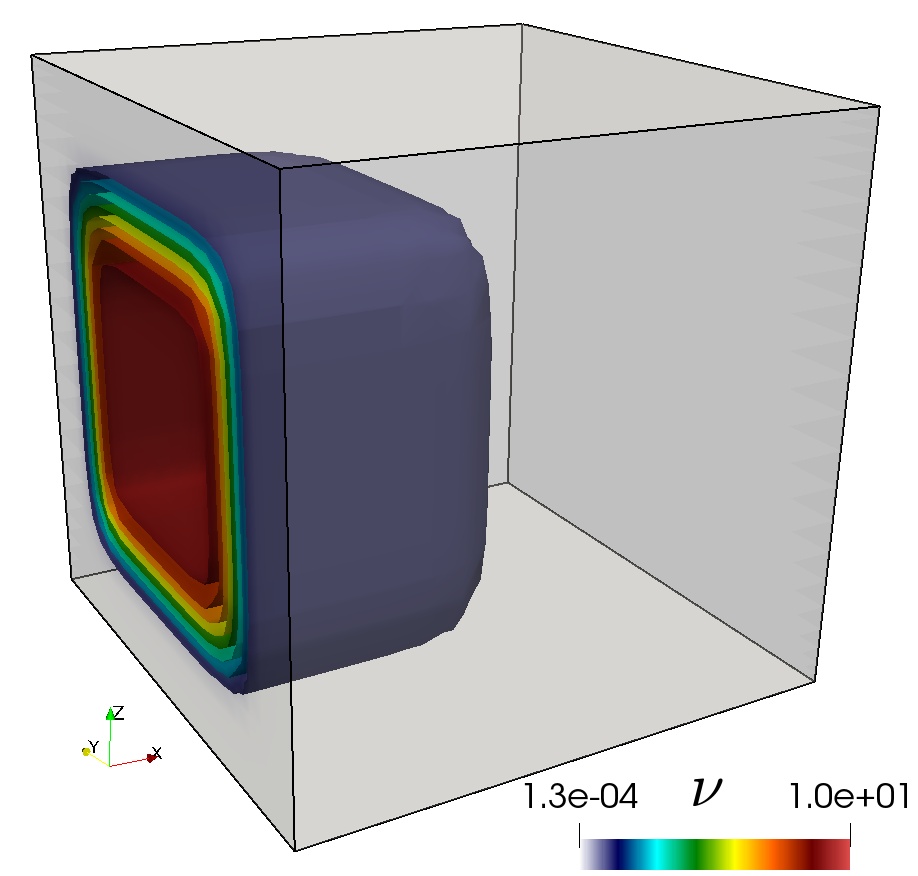}
\includegraphics[width=0.245\textwidth]{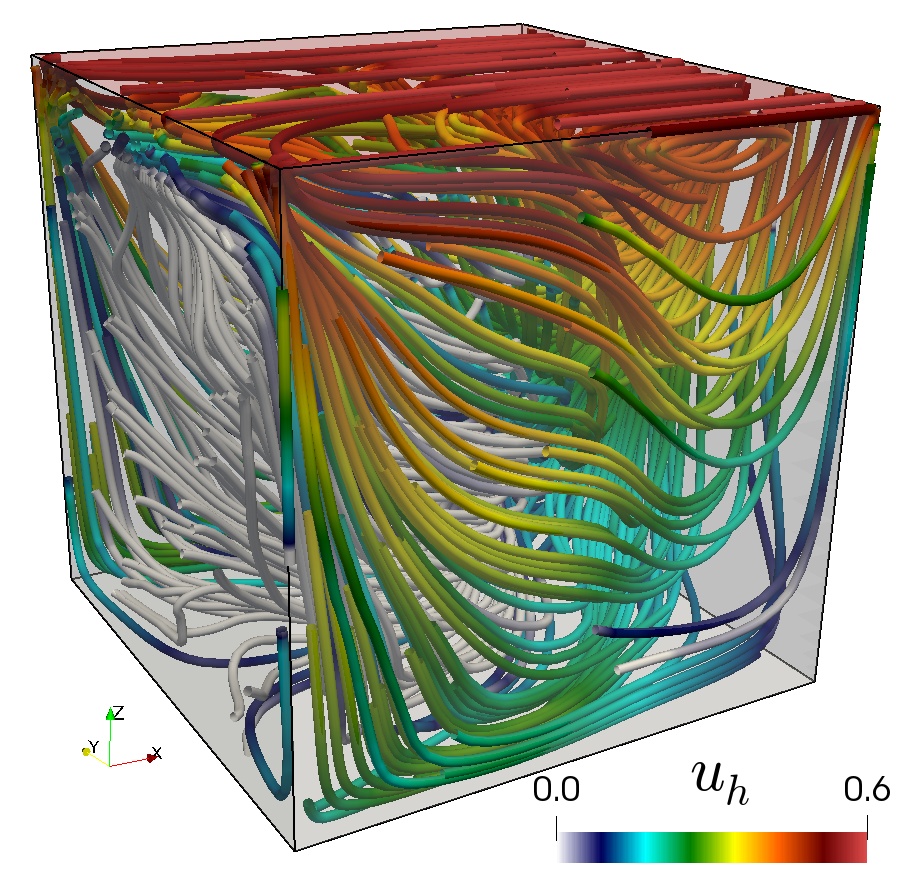}
\includegraphics[width=0.245\textwidth]{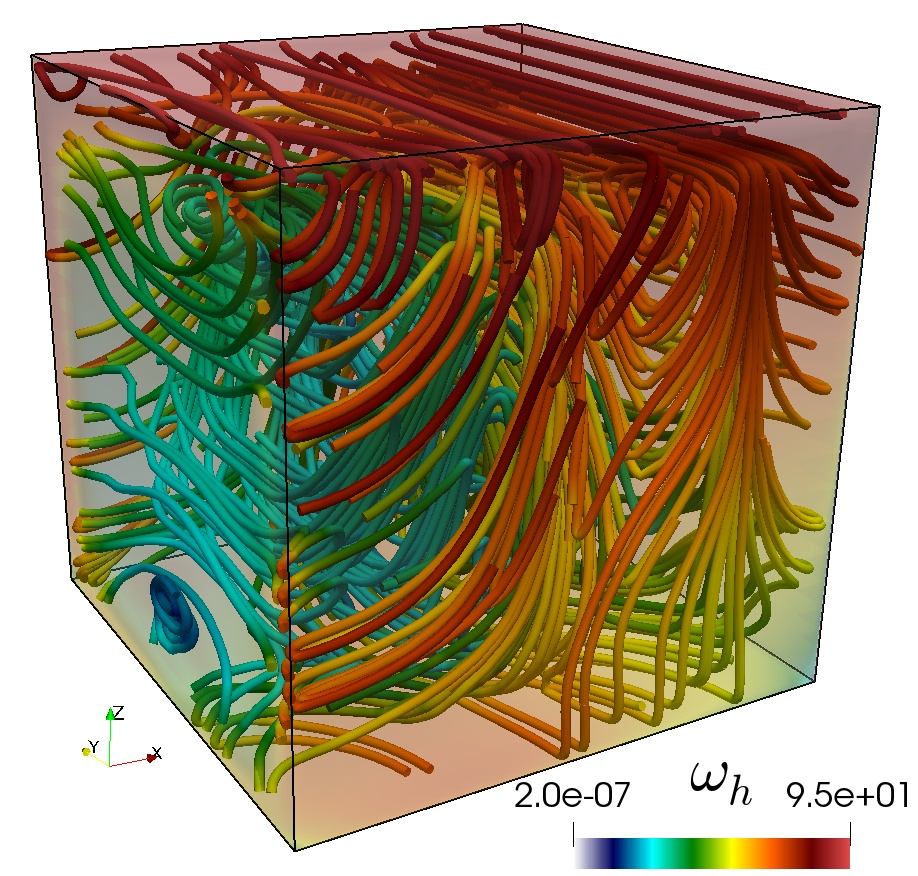}
\includegraphics[width=0.245\textwidth]{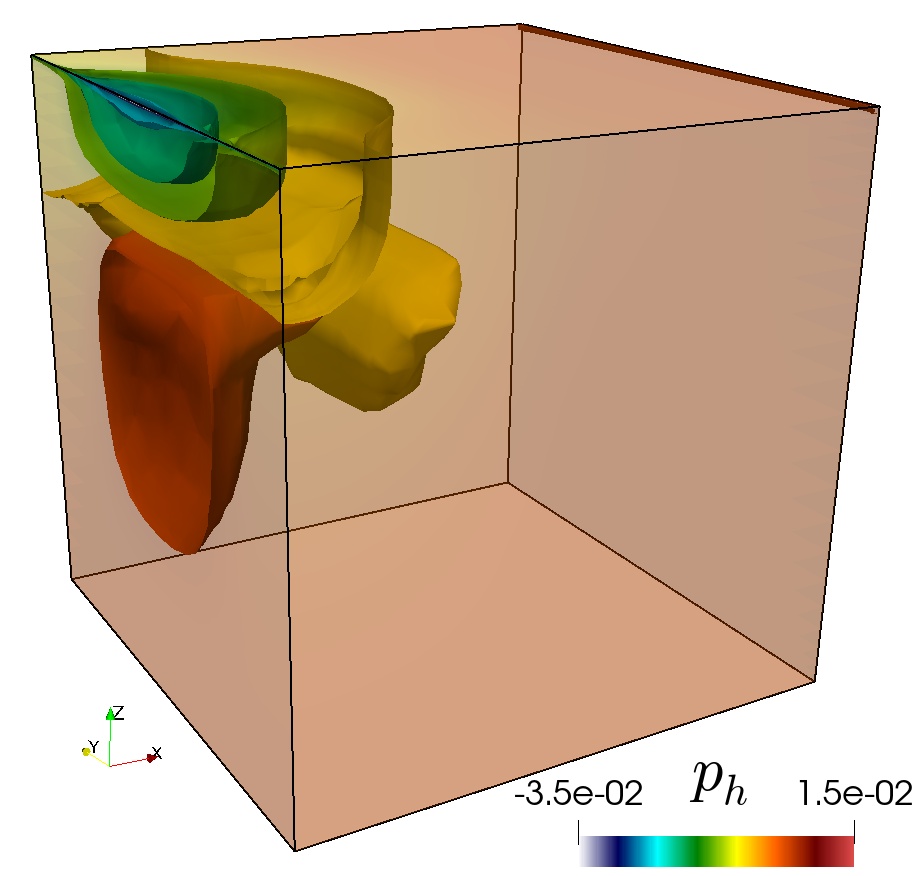}
\end{center}
\vspace{-3mm}
\caption{Viscosity contour, velocity streamlines, vorticity streamlines, and pressure 
computed using the MINI-element.}\label{fig:ex02}
\end{figure}



\begin{thebibliography}{99}

\bibitem{ACPT07} {\sc M. Amara, D. Capatina-Papaghiuc, and
  D. Trujillo}, {\it Stabilized finite element method for
  Navier-Stokes equations with physical boundary conditions}.
  Math. Comp., 76(259) (2007) 1195--1217.
    
\bibitem{ACT} {\sc M. Amara, E. Chac\'on Vera, and D. Trujillo},
    {\it Vorticity--velocity--pressure formulation for Stokes
      problem}.  Math. Comp., 73(248) (2004) 1673--1697.
  
  \bibitem{amoura07} {\sc K. Amoura,  M. Aza\"iez, C. Bernardi, N. Chorfi, and S. Saadi}, 
{\it Spectral element discretization of the vorticity, velocity and pressure
 formulation of the Navier-Stokes problem}.
Calcolo, 44(3) (2007) 165--188. 

\bibitem{anaya15b} {\sc V. Anaya, G.N. Gatica, D. Mora, and
  R. Ruiz-Baier}, {\it An augmented velocity-vorticity-pressure
  formulation for the Brinkman equations}.
  Int. J. Numer. Meth. Fluids, 79(3) (2015) 109--137.

\bibitem{anaya16} {\sc V. Anaya, D. Mora, R. Oyarz\'ua, and
  R. Ruiz-Baier}, {\it A priori and a posteriori error analysis of a mixed scheme for the Brinkman problem}.
  Numer. Math., 133(4) (2016) 781--817.


\bibitem{bernardi06} {\sc C. Bernardi and N. Chorfi}, {\it Spectral
  discretization of the vorticity, velocity, and pressure formulation
  of the Stokes problem}. SIAM J. Numer. Anal., 44(2) (2007) 826--850.



  \bibitem{Boffi94} {\sc D. Boffi},
  {\it Stability of higher order triangular Hood--Taylor methods for stationary Stokes
equations}.  Math. Models Methods Appl. Sci., 2(4) (1994) 223--235.

 \bibitem{Boffi97} {\sc D. Boffi},
  {\it Three--dimensional finite element methods for the Stokes problem}.
  SIAM J. Numer. Anal., 34(2) (1997) 664--670.  
  

\bibitem{bbf-2013} {\sc D. Boffi, F. Brezzi, and M. Fortin}, {\it Mixed
 Finite Element Methods and applications}.
 Springer Series in Computational Mathematics, 44. Springer, Heidelberg (2013).
  

\bibitem{chang90} {\sc C.L. Chang and B.-N. Jiang}, {\it An error
    analysis of least-squares finite element method of
    velocity-pressure-vorticity formulation for the Stokes
    problem}. Comput. Methods Appl.  Mech. Engrg., 84(3) (1990) 247--255.

    
\bibitem{cockburn} {\sc B. Cockburn and J. Cui}, {\it An analysis
    of HDG methods for the vorticity-velocity-pressure formulation of
    the Stokes problem in three dimensions}. Math. Comp., 81(279)
    (2012) 1355--1368.


\bibitem{DSScmame03} {\sc F. Dubois, M. Sala\"un, and S. Salmon}, {\it
  First vorticity-velocity-pressure numerical scheme for the Stokes
  problem}. Comput. Methods Appl. Mech. Engrg., 192(44--46) (2003)
  4877--4907.

\bibitem{G2014}
{\sc G.N. Gatica}, 
{\it A Simple Introduction to the Mixed Finite Element Method. Theory and Applications}.
Springer Briefs in Mathematics, Springer, Cham Heidelberg New York Dordrecht London (2014).



\bibitem{gr-1986} {\sc V. Girault and P.A. Raviart}, {\it Finite
  element methods for Navier-Stokes equations. Theory and algorithms}.
  Springer-Verlag, Berlin (1986).

 \bibitem{HT} {\sc P. Hood and C. Taylor}, {\it Numerical solution of the Navier-Stokes equations using the finite
element technique}.  Comput. Fluids, 1 (1973) 1--28.
 
 \bibitem{john15} {\sc V. John, K. Kaiser, and J. Novo}, {\it Finite element methods for the incompressible Stokes equations with variable viscosity}. 
 ZAMM: J. Appl. Math. Mech., 96(2) (2015) 205--216.

\bibitem{patil82} {\sc P.R. Patil and G. Vaidyanathan}, {\it Effect of variable viscosity on thermohaline convection in a porous medium}. 
J. Hydrology, 57(1-2) (1982) 147--161.  

\bibitem{payne99} {\sc L.E. Payne, J.C. Song, and B. Straughan}, {\it Continuous dependence and convergence results for Brinkman and Forchheimer models with variable viscosity}. Proc. Royal Soc. London Series A, 455 (1986) 1--20. 

\bibitem{rudi17} {\sc J. Rudi, G. Stadler, and O. Ghattas}, {\it Weighted BFBT preconditioner for Stokes flow problems with highly heterogeneous viscosity}. 
SIAM J. Sci. Comput., 39(5) (2017) S272--S297.

\bibitem{SSIMA15} {\sc M. Sala\"un and S. Salmon}, {\it Low-order
    finite element method for the well-posed bidimensional Stokes
    problem}.  IMA J. Numer. Anal., 35 (2015) 427--453.
    
\bibitem{speziale87} {\sc C.G. Speziale}, {\it On the advantages of
  the vorticity-velocity formulations of the equations of fluid
  dynamics}. J. Comput. Phys., 73(2) (1987) 476--480.
  
  \bibitem{trias13} {\sc F.X. Trias, A. Gorobets, and A. Oliva}, {\it A simple approach to discretize the viscous term with spatially varying (eddy-)viscosity}. 
  J. Comput. Phys., 253 (2013) 405--417. 
  
  \bibitem{tsai05} {\sc C.-C. Tsai and S.-Y. Yang}, {\it On the velocity-vorticity-pressure least-squares 
finite element method for the stationary incompressible Oseen problem}. 
J. Comput. Appl. Math., 182(1) (2005) 211--232. 

\bibitem{vassilevski14} {\sc P.S. Vassilevski and U. Villa},
{\it A mixed formulation for the Brinkman problem}.
SIAM J. Numer. Anal., 52(1) (2014) 258--281.
    
\bibitem{woodfield19} {\sc J. Woodfield, M. Alvarez, B. G\'omez-Vargas, and R. Ruiz-Baier}, 
{\it Stability and finite element approximation of phase change models for natural convection in porous media}. 
J. Comput. Appl. Maths., 360 (2019) 117--137.

\end{thebibliography}
\end{document}